\definecolor{purple}{rgb}{0.65, 0, 1}
\definecolor{orange}{rgb}{1,.5,0}
\definecolor{brown}{rgb}{.9,.73,.26}
\newtheorem{theorem}{Theorem}[section]
\newtheorem{remark}{Remark}[section]
\newtheorem{definition}{Definition}[section]
\newtheorem{corollary}[theorem]{Corollary}
\numberwithin{equation}{section}
\title{Fujita type results for a parabolic inequality with a non-linear convolution term on the Heisenberg group}
\author{
Ahmad Z. Fino, Mokhtar Kirane, Bilal Barakeh, Sebti Kerbal}
\date{}
\begin{document}
	\maketitle

\begin{abstract}
The purpose of this paper is to investigate the non-existence of global weak solutions of the following degenerate inequality on the Heisenberg group
$$
\begin{cases}
u_{t}-\Delta_{\mathbb{H}}u\geq (\mathcal{K}\ast_{_{\mathbb{H}}}|u|^p)|u|^q ,\qquad {\eta\in \mathbb{H}^n,\,\,\,t>0,}
 \\{}\\ u(\eta,0)=u_{0}(\eta), \qquad\qquad\qquad\quad \eta\in \mathbb{H}^n,
 \end{cases}
$$
where $n\geq1$, $p,q>0$, $u_0\in L^1_{\hbox{\tiny{loc}}}(\mathbb{H}^n)$, $\Delta_{\mathbb{H}}$ is the Heisenberg Laplacian, and $\mathcal{K}:(0,\infty)\rightarrow(0,\infty)$  is a continuous function satisfying $\mathcal{K}(|\cdotp|_{_{\mathbb{H}}})\in L^1_{\hbox{\tiny{loc}}}(\mathbb{H}^n)$ which decreases in a vicinity of infinity. In addition, $\ast_{_{\mathbb{H}}}$ denotes the convolution operation in $\mathbb{H}^n$. Our approach is based on the non-linear capacity method.
\end{abstract}

\medskip

\noindent {\bf MSC 2020 Classification}:  Primary 35A01; 35R03; 35B53; Secondary 35B33; 35B45

\noindent {\bf Keywords:} Parabolic inequalities, Heisenberg group, global non-existence, non-linear capacity estimates


\section{Introduction}

In this paper, we are interested in the non-existence of global weak solutions of the following parabolic inequality on the Heisenberg group
\begin{equation}\label{1}
\begin{cases}
u_{t}-\Delta_{\mathbb{H}}u\geq (\mathcal{K}\ast_{_{\mathbb{H}}}|u|^p)|u|^q ,\qquad {\eta\in \mathbb{H}^n,\,\,\,t>0,}
 \\{}\\ u(\eta,0)=u_{0}(\eta), \qquad\qquad\qquad\quad \eta\in \mathbb{H}^n,
 \end{cases}
\end{equation}
where $n\geq1$, $p,q>0$, $u_0\in L^1_{\hbox{\tiny{loc}}}(\mathbb{H}^n)$, and $\Delta_{\mathbb{H}}$ is the Heisenberg Laplacian. The function $\mathcal{K}:(0,\infty)\rightarrow(0,\infty)$ is continuous, it satisfies $\mathcal{K}(|\eta|_{_{\mathbb{H}}})\in L^1_{\hbox{\tiny{loc}}}(\mathbb{H}^n)$ and there exists $R_0>1$ such that $\inf\limits_{r\in(0,R)}\mathcal{K}(r)=\mathcal{K}(R)$ for all $R>R_0$. The non-linear convolution term $\mathcal{K}\ast_{_{\mathbb{H}}}|u|^p$ is the Heisenberg convolution between $\mathcal{K}$ and $|u|^p$ defined by
$$(\mathcal{K}\ast_{_{\mathbb{H}}}|u|^p)(\eta)=\int_{\mathbb{H}^n}\mathcal{K}(d_{_{\mathbb{H}}}(\eta,\xi))|u(\xi)|^p\,d\xi=\int_{\mathbb{H}^n}\mathcal{K}(|\xi|_{_{\mathbb{H}}})|u(\eta\circ\xi^{-1})|^p\,d\xi,$$
where $d_{_{\mathbb{H}}}$ and $|\cdotp|_{_{\mathbb{H}}}$ are, respectively, the Kor\'anyi distance and Kor\'anyi norm defined below.
\begin{remark}
Typical examples of $\mathcal{K}$ are the constant functions as well as
$$\mathcal{K}(r)=r^{-\alpha},\quad\alpha\in(0,Q),$$
where $Q=2n+2$ is the homogeneous dimension of $\mathbb{H}^n$.
\end{remark}

The Heisenberg group  is the Lie group $\mathbb{H}^n=\mathbb{R}^{2n+1}$ equipped with the following law
$$\eta\circ\eta^\prime=(x+x^\prime,y+y^\prime,\tau+\tau^\prime+2(x\cdotp y^\prime-x^\prime\cdotp y)),$$
where $\eta=(x,y,\tau)$, $\eta^\prime=(x^\prime,y^\prime,\tau^\prime)$, and $\cdotp$ is the scalar product in $\mathbb{R}^n$. Let us denote the parabolic dilation in $\mathbb{R}^{2n+1}$ by $\delta_\lambda$, namely, $\delta_\lambda(\eta)=(\lambda x, \lambda y, \lambda^2 \tau)$ for any $\lambda>0$, $\eta=(x,y,\tau)\in\mathbb{H}^n$. The Jacobian determinant of $\delta_\lambda$ is $\lambda^Q$, where $Q=2n+2$ is the homogeneous dimension of $\mathbb{H}^n$.\\
The homogeneous Heisenberg norm (also called Kor\'anyi norm) is derived from an anisotropic dilation on the Heisenberg group and is defined by
$$|\eta|_{_{\mathbb{H}}}=\left(\left(\sum_{i=1}^n (x_i^2+y_i^2)\right)^2+\tau^2\right)^{\frac{1}{4}}=\left((|x|^2+|y|^2)^2+\tau^2\right)^{\frac{1}{4}},$$
where $|\cdotp|$ is the Euclidean norm associated to $\mathbb{R}^n$; the associated Kor\'anyi distance between two points $\eta$ and $\xi$ of $\mathbb{H}$ is defined by
$$d_{_{\mathbb{H}}}(\eta,\xi)=|\xi^{-1}\circ\eta|_{_{\mathbb{H}}},\quad \eta,\xi\in\mathbb{H},$$
where $\xi^{-1}$ denotes the inverse of $\xi$ with respect to the group action, i.e. $\xi^{-1}=-\xi$. \\
The left-invariant vector fields that span the Lie algebra are given by
$$X_i=\partial_{x_i}-2 y_i\partial_\tau,\qquad Y_i=\partial_{y_i}+2 x_i\partial_\tau.$$
The Heisenberg gradient is given by
\begin{equation}\label{48}
\nabla_{\mathbb{H}}=(X_1,\dots,X_n,Y_1,\dots,Y_n),
\end{equation}
and the sub-Laplacian (also called Kohn Laplacian) is defined by
\begin{equation}\label{40}
\Delta_{\mathbb{H}}=\sum_{i=1}^n(X_i^2+Y_i^2)=\Delta_x+\Delta_y+4(|x|^2+|y|^2)\partial_\tau^2+4\sum_{i=1}^{n}\left(x_i\partial_{y_i\tau}^2-y_i\partial_{x_i\tau}^2\right),
\end{equation}
where $\Delta_x=\nabla_x\cdotp\nabla_x$ and $\Delta_y=\nabla_y\cdotp\nabla_y$ stand for the Laplace operators on $\mathbb{R}^n$.

\subsection{Historical background}
\subsubsection{Results on $\mathbb{R}^n$}
Let us start by the pioneering paper of Fujita \cite{Fuj} where the following semi-linear heat equation 
\begin{equation}\label{heat}
\left\{
\begin{array}{ll}
\,\,\displaystyle{u_t-\Delta u=|u|^{p}}&\displaystyle {x\in {\mathbb{R}^n},\;t>0,}\\
\\
\displaystyle{u(x,0)=u_0(x)}&\displaystyle{x\in {\mathbb{R}^n},}
\end{array}
\right. \end{equation}
has been investigated. The exponent $p_F=1+2/n$ is known as the 
critical Fujita exponent of \eqref{heat}. Namely, for $p<p_F$, Fujita \cite{Fuj} proved the non-existence of non-negative global-in-time solution for any non-trivial initial condition, and for $p>p_F$, global solutions do exist for any sufficiently small non-negative initial data. The proof of blowing-up non-negative solutions in the critical case $p=p_F$ was completed in \cite{11, 17, 22}.

The study of convolution terms in the non-linearity of evolution equations has been used to model various quantities in gravity and quantum physics. For instance, the equation
\begin{equation}\label{convolution1}
i\psi_{t}-\Delta \psi=(|x|^{\alpha-n}\ast \psi^2)\psi ,\qquad x\in \mathbb{R}^n,\,\,\,t>0,
\end{equation}
$\alpha\in(0,n)$, $n\geq1$, was introduced by D.H. Hartee \cite{Hartee1,Hartee2,Hartee3} in 1928, shortly after the publication of the Schr\"{o}dinger equation, in order to study the non-relativistic atoms, using the concept of self-consistency. The stationary case of \eqref{convolution1} for $n=3$ and $\alpha=2$ is known is the literature as the Choquard equation and was introduced in \cite{Pekar} as a model in quantum theory.

To our knowledge, the first results in solving \eqref{heat} with a nonlocal non-linearity defined by the convolution operation i.e.
\begin{equation}\label{realnonlocal}
\begin{cases}
u_{t}-\Delta u\geq (\mathcal{K}\ast |u|^p)|u|^q ,\qquad {x\in \mathbb{R}^n,\,\,\,t>0,}
 \\{}\\ u(x,0)=u_{0}(x), \qquad\qquad\qquad x\in \mathbb{R}^n,
 \end{cases}
\end{equation}
were done recently in 2022 by Filippucci and Ghergu \cite{Filippucci1,Filippucci2}, where $n\geq1$, $p,q>0$, and $u_0\in L^1_{\hbox{\tiny{loc}}}(\mathbb{R}^n)$. The non-linear convolution term $\mathcal{K}\ast |u|^p$ is defined by
$$(\mathcal{K}\ast |u|^p)(x)=\int_{\mathbb{R}^n}\mathcal{K}(|x-y|)|u(y)|^p\,dy=\int_{\mathbb{R}^n}\mathcal{K}(|y|)|u(x-y)|^p\,dy.$$
Indeed, in \cite{Filippucci1,Filippucci2} the authors studied even more general cases than \eqref{realnonlocal} which includes quasilinear operators such as m-Laplacian. They proved that \eqref{realnonlocal} has no nontrivial 
global weak solutions whenever $p+q>2$, $u_0\in L^1(\mathbb{R}^n)$,
$$\int_{\mathbb{R}^n}u_0(x)\,dx>0\qquad\hbox{and}\qquad \limsup_{R\longrightarrow\infty}\mathcal{K}(R)R^{\frac{2n+2}{p+q}-n}>0.$$
They have illustrated their results by discussing the case of pure powers in the potential $\mathcal{K}(r)=r^{-\alpha},$ $\alpha\in(0,1),$ and proved that problem \eqref{realnonlocal} has no global weak solutions whenever $u_0\in L^1(\mathbb{R}^n)$, $2<p+q\leq\frac{2(n+1)}{n+\alpha},$ and $\displaystyle\int_{\mathbb{R}^n}u_0(\eta)\,d\eta>0$.

\subsubsection{Motivation} Motivated by the above facts, the aim of this paper is to generalize the non-existence of global solutions results to the case of Heisenberg group (Theorem \ref{theo1}-i below). In addition, we get optimal results under some restrictions on the initial data (Theorem \ref{theo1}-ii below).\\
Our proof approach is based on the method of non-linear capacity estimates specifically adapted to the nature of the Heisenberg groups. The non-linear capacity method was introduced to prove the non-existence of global solutions in $\mathbb{R}^n$ by  Baras and Pierre \cite{Baras-Pierre}, then used by Baras and Kersner in \cite{Baras}; later on, it was developed by Zhang in \cite{Zhang} and Pohozaev and Mitidieri in \cite{PM1}.  It was also used by Kirane and Guedda \cite{Kirane-Guedda}, then by Kirane et al.  in \cite{KLT},  and Fino et al in \cite{Fino1,Fino3,Fino4}.  While, in $\mathbb{H}^n$, it was used by Pohozaev and Veron in \cite{Pohoz1}, by D'Ambrosio in \cite{Ambrosio}, and by Jleli-Kirane-Samet in \cite{Jleli-Kirane} and recently by Fino-Ruzhansky-Torebek in \cite{Fino2}. 

\section{Main results}
Our main results are about the nonexistence of global weak solutions of \eqref{1} for various values of exponents $p$ and $q$.\\
Let us start with the definition of the weak solution of \eqref{1}.
\begin{definition}\textup{(Weak solution of \eqref{1})}${}$\\
Let $u_0\in L^1_{\hbox{\tiny{loc}}}(\mathbb{H}^n)$ and $T>0$. We say that $u\in L_{\hbox{\tiny{loc}}}^p((0,T)\times\mathbb{H}^n)$ is a weak solution of \eqref{1} on $[0,T)\times\mathbb{H}^n$ if
$$ (\mathcal{K}\ast_{_{\mathbb{H}}}|u|^p)|u|^q \in L_{\hbox{\tiny{loc}}}^1((0,T)\times\mathbb{H}^n),$$
and
\begin{equation}\label{weaksolution2}\begin{split}
&\int_{\mathbb{H}^n}u(\tau,\eta)\psi(\tau,\eta)\,d\eta-\int_{\mathbb{H}^n}u(0,\eta)\psi(0,\eta)\,d\eta\nonumber\\
&\geq\int_0^\tau\int_{\mathbb{H}^n}(\mathcal{K}\ast_{_{\mathbb{H}}}|u|^p)|u|^q\psi(t,\eta)\,d\eta\,dt+\int_0^\tau\int_{\mathbb{H}^n}u\,\Delta_{\mathbb{H}}\psi(t,\eta)\,d\eta\,dt\nonumber\\
&\quad+\int_0^\tau\int_{\mathbb{H}^n}u\,\psi_t(t,\eta)\,d\eta\,dt,
\end{split}\end{equation}
holds for all non-negative compactly supported test function $\psi\in C^{1,2}_{t,x}([0,T)\times\mathbb{H}^n)$, and $0\leq\tau<T$. If $T=\infty$,  $u$ is called a global  in time weak solution to \eqref{1}.
\end{definition}

\begin{theorem}\label{theo1}
Let $n\geq1,$ $p,q>0$ such that $p+q>2$.
\begin{itemize}
\item[(i)] If $u_0\in L^1(\mathbb{H}^n)$,
$$\int_{\mathbb{H}^n}u_0(\eta)\,d\eta>0\qquad\hbox{and}\qquad \limsup_{R\longrightarrow\infty}\mathcal{K}(R)R^{\frac{2Q+2}{p+q}-Q}>0,$$
then problem \eqref{1} has no global weak solutions.
\item[(ii)] If $ u_0\in L^1_{\hbox{\tiny{loc}}}(\mathbb{H}^n)$,
$$u_0(\eta)\geq\varepsilon(1+|\eta|_{_{\mathbb{H}}}^2)^{-\gamma/2}\,\,\,\,\hbox{and}\,\,\,\,\,
\liminf_{R\longrightarrow\infty}\left(\mathcal{K}(R)^{-1}\,R^{\gamma(p+q-1)-2-Q}\right)=0,$$
for some positive constant $\varepsilon>0$ and any exponent $\gamma>0$, then problem \eqref{1} has no global weak solutions.\\
\end{itemize}
\end{theorem}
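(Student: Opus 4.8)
The plan is to argue by contradiction with the nonlinear capacity (test‑function) method, adapted to the parabolic dilation $\delta_\lambda$ of $\mathbb{H}^n$. Suppose $u$ is a global weak solution and take, for $R>R_0$, the cut‑off $\psi_R(t,\eta)=\Phi(t/R^2)\,\varphi(|\eta|_{_{\mathbb{H}}}/R)^{\ell}$ with $\ell\gg1$ and $\Phi,\varphi$ smooth, non‑increasing, $\equiv1$ on $[0,1]$ and $\equiv0$ on $[2,\infty)$; thus $\psi_R\equiv1$ on the parabolic cylinder $B_R\times[0,R^2]$ and $\mathrm{supp}\,\psi_R\subset B_{2R}\times[0,2R^2]$. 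Inserting $\psi_R$ in the weak formulation with $\tau=2R^2$ (so that $\psi_R(\tau,\cdot)=0$) and moving the two derivative terms to the right yields the basic inequality
\[\int_0^{\tau}\!\!\int_{\mathbb{H}^n}(\mathcal{K}\ast_{_{\mathbb{H}}}|u|^p)|u|^q\,\psi_R+\int_{\mathbb{H}^n}u_0\,\varphi(|\cdot|_{_{\mathbb{H}}}/R)^{\ell}\ \le\ \int_0^{\tau}\!\!\int_{\mathbb{H}^n}|u|\bigl(|\Delta_{\mathbb{H}}\psi_R|+|\partial_t\psi_R|\bigr).\]
Since $\Delta_{\mathbb{H}}$ and $\partial_t$ both scale like $R^{-2}$ under $\delta_R$ and their supports lie in the transition shell, the right‑hand side is bounded by $CR^{-2}\!\int|u|\psi_R^{\,1-2/\ell}$.

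The decisive ingredient is a lower bound for the nonlocal term, uniform over the support. For $\eta,\xi\in B_{2R}$ the triangle inequality for the Kor\'anyi norm gives $d_{_{\mathbb{H}}}(\eta,\xi)\le|\eta|_{_{\mathbb{H}}}+|\xi|_{_{\mathbb{H}}}\le4R$, whence the running‑infimum hypothesis $\inf_{r\in(0,4R)}\mathcal{K}(r)=\mathcal{K}(4R)$ forces $\mathcal{K}(d_{_{\mathbb{H}}}(\eta,\xi))\ge\mathcal{K}(4R)$. Restricting the convolution to $B_{2R}$ and inserting the weight $\psi_R\le1$, I obtain $(\mathcal{K}\ast_{_{\mathbb{H}}}|u|^p)(\eta,t)\ge\mathcal{K}(4R)\int_{B_{2R}}|u(\xi,t)|^p\psi_R(\xi,t)\,d\xi$ for every $\eta\in B_{2R}$. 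Feeding this into the nonlocal term and applying, for each fixed $t$, the interpolation inequality
\[\Bigl(\int|u|\psi_R\Bigr)^{p+q}\le\Bigl(\int\psi_R\Bigr)^{p+q-2}\Bigl(\int|u|^p\psi_R\Bigr)\Bigl(\int|u|^q\psi_R\Bigr)\]
on $B_{2R}$ — which is exactly $\|u\|_{L^1}\le\|u\|_{L^{(p+q)/2}}\le\|u\|_{L^p}^{p/(p+q)}\|u\|_{L^q}^{q/(p+q)}$ for the normalized measure $\psi_R\,d\eta/\!\int\psi_R$, whose middle step needs $(p+q)/2\ge1$ — followed by Jensen in $t$ over $[0,2R^2]$, and using $\int\psi_R\,d\eta\lesssim R^{Q}$ together with the time length $R^2$, I reach
\[\int_0^{\tau}\!\!\int(\mathcal{K}\ast_{_{\mathbb{H}}}|u|^p)|u|^q\psi_R\ \ge\ c_0\,\mathcal{K}(4R)\,R^{\sigma}\Bigl(\int|u|\psi_R\Bigr)^{p+q},\qquad \sigma:=2(Q+1)-(Q+2)(p+q).\]

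Writing $Z:=\int|u|\psi_R^{\,1-2/\ell}$ (the common weight, with $\ell$ large, makes the two sides match), the previous displays read $(\text{nonlocal})\ge c_0\mathcal{K}(4R)R^{\sigma}Z^{p+q}$ and $(\text{bad terms})\le CR^{-2}Z$. The main obstacle is that the bad terms are only linear in $|u|$ whereas the driving term is of degree $p+q$ and nonlocal; this is resolved precisely by the reduction to the single scalar $Z$, to which a scalar Young inequality applies: $CR^{-2}Z\le\tfrac12 c_0\mathcal{K}(4R)R^{\sigma}Z^{p+q}+C'[\mathcal{K}(4R)R^{\beta_0}]^{-1/(p+q-1)}$, where $\beta_0:=2(Q+1)-Q(p+q)=(p+q)\bigl(\tfrac{2Q+2}{p+q}-Q\bigr)$ and I have used $\sigma+2(p+q)=\beta_0$. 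Absorbing the $Z^{p+q}$‑term into the nonlocal term on the left of the basic inequality leaves the master estimate
\[\int_{\mathbb{H}^n}u_0\,\varphi(|\cdot|_{_{\mathbb{H}}}/R)^{\ell}\ \le\ C\bigl[\mathcal{K}(4R)R^{\beta_0}\bigr]^{-1/(p+q-1)},\]
the fixed factor $4$ in the argument of $\mathcal{K}$ being harmless after relabelling $R$.

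For part (i), as $R\to\infty$ the left‑hand side converges to $\int_{\mathbb{H}^n}u_0>0$ by dominated convergence ($u_0\in L^1$), while the hypothesis $\limsup_R\mathcal{K}(R)R^{(2Q+2)/(p+q)-Q}>0$ — which, since $\mathcal{K}$ is bounded near infinity, forces $\beta_0\ge0$ — provides $R_k\to\infty$ with $\mathcal{K}(4R_k)R_k^{\beta_0}\gtrsim R_k^{\beta_0(p+q-1)/(p+q)}\to\infty$ whenever $\beta_0>0$, so the right‑hand side tends to $0$ along $R_k$: a contradiction. The borderline $\beta_0=0$ (i.e.\ $p+q=2+2/Q$) is the critical Fujita case, handled by the standard refinement: one first deduces finiteness of the global space–time integral of the nonlinearity from the a priori bound, then re‑runs the estimate with the bad integral taken over the shrinking shell $\{R\le|\eta|_{_{\mathbb{H}}}\le2R\}\times\{R^2\le t\le2R^2\}$, whose contribution tends to $0$ and drives the remainder to $0$. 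For part (ii), the pointwise lower bound on $u_0$ gives $\int u_0\varphi^{\ell}\gtrsim\varepsilon\int_{B_R}(1+|\eta|_{_{\mathbb{H}}}^2)^{-\gamma/2}\gtrsim\varepsilon R^{Q-\gamma}$; inserting this into the master estimate and simplifying the exponent via $(Q-\gamma)(p+q-1)+\beta_0=-(\gamma(p+q-1)-2-Q)$ produces $\mathcal{K}(4R)^{-1}R^{\gamma(p+q-1)-2-Q}\ge c''>0$ for all large $R$, i.e.\ $\liminf_R\mathcal{K}(R)^{-1}R^{\gamma(p+q-1)-2-Q}>0$, contradicting the hypothesis. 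The crux throughout is the nonlocal lower bound — turning the convolution into the clean power $Z^{p+q}$ valid for all $p,q>0$ with $p+q>2$ — which rests on the Kor\'anyi triangle inequality together with the interpolation through the exponent $(p+q)/2\ge1$, the one place where $p+q>2$ is indispensable.
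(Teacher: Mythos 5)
Your overall strategy is the paper's strategy --- nonlinear capacity with parabolically scaled cut-offs, lower-bounding the convolution on the support of the test function via the running-infimum hypothesis on $\mathcal{K}$, and an interpolation that converts $(\int|u|^p)(\int|u|^q)$ into a superlinear power of a single functional --- but the execution differs at two points worth comparing. First, your interpolation is the three-exponent H\"older bound $\bigl(\int|u|\psi\bigr)^{p+q}\le\bigl(\int\psi\bigr)^{p+q-2}\bigl(\int|u|^p\psi\bigr)\bigl(\int|u|^q\psi\bigr)$, reducing everything to the scalar $Z=\iint|u|\psi^{1-2/\ell}$, followed by Jensen in $t$ and scalar Young; the paper instead symmetrizes with Cauchy--Schwarz on the double integral, $\iint|u(\xi)|^{\frac{p+q}{2}}\psi(\xi)|u(\eta)|^{\frac{p+q}{2}}\psi(\eta)\le\iint|u(\xi)|^{p}\psi(\xi)|u(\eta)|^{q}\psi(\eta)$, which produces $J(t)^2$ with $J(t)=\int|u|^{\frac{p+q}{2}}\psi$, exactly the quantity that its H\"older estimate of the derivative terms also produces, so the matching of weights is automatic. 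Second, you apply Young immediately and keep the $u_0$-term, obtaining the master estimate $\int u_0\varphi^\ell\le C[\mathcal{K}(4R)R^{\beta_0}]^{-1/(p+q-1)}$; the paper first drops the $u_0$-term to get decay of $\int_0^TJ^2$, and splits into the cases $\mathcal{K}(R_j)R_j^{(2Q+2)/(p+q)-Q}\to\infty$ and $\to\ell\in(0,\infty)$, handling the second by showing $J\in L^2(0,\infty)$ and extracting an $o(1)$ factor from the tails. Your route is cleaner when $\beta_0>0$ (the divergence of $\mathcal{K}(4R_k)R_k^{\beta_0}$ is then automatic from the hypothesis, as you note), while the paper's two-case analysis is what actually carries the borderline situation. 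Part (ii) in your write-up coincides with the paper's argument, including the exponent bookkeeping.

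Two soft spots. (a) The reduction to the single scalar $Z$ is not quite as written: your nonlocal lower bound is in terms of $\iint|u|\psi_R$, your derivative bound in terms of $\iint|u|\psi_R^{1-2/\ell}\ge\iint|u|\psi_R$, so the absorption goes the wrong way. It is fixable --- run the generalized H\"older with weights $\mathbf{1}_{B_{2R}},\ \psi_R,\ \mathbf{1}_{B_{2R}}$ to get a lower bound in terms of $\int|u|\psi_R^{1/(p+q)}\ge\int|u|\psi_R^{1-2/\ell}$ for $\ell$ large --- but your parenthetical ``the common weight makes the two sides match'' does not by itself close this. (b) The borderline case $\beta_0=0$ (e.g.\ $\mathcal{K}\equiv 1$, $p+q=2+2/Q$), which is genuinely part of the theorem, is only sketched; the paper's $\ell<\infty$ refinement (finiteness of $\int_0^\infty J^2$, tails of the bad integrals tending to zero, hence $\int_0^\infty J^2=0$) is the actual content there and would need to be carried out in your framework as well.
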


\noindent Note that, when 
$$\mathcal{K}(r)=r^{-\alpha},\quad\alpha\in(0,Q),$$
i.e.
\begin{equation}\label{2}
\begin{cases}
u_{t}-\Delta_{\mathbb{H}}u\geq (|\eta|^{-\alpha}_{_{\mathbb{H}}}\ast_{_{\mathbb{H}}}|u|^p)|u|^q ,\qquad {\,\eta\in \mathbb{H}^n,\,\,\,t>0,}
 \\{}\\ u(\eta,0)=u_{0}(\eta), \qquad\qquad\qquad\qquad\quad \eta\in \mathbb{H}^n,
 \end{cases}
\end{equation}
and due to the fact that
\begin{eqnarray*}
\limsup_{R\longrightarrow\infty}\mathcal{K}(R)R^{\frac{2Q+2}{p+q}-Q}>0&\Longleftrightarrow&\lim_{R\longrightarrow\infty}R^{\frac{2Q+2}{p+q}-Q-\alpha}>0\\
&\Longleftrightarrow&\frac{2Q+2}{p+q}-Q-\alpha\geq0,
\end{eqnarray*}
and
\begin{eqnarray*}
\liminf_{R\longrightarrow\infty}\left(\mathcal{K}(R)^{-1}\,R^{\gamma(p+q-1)-2-Q}\right)=0&\Longleftrightarrow&\lim_{R\longrightarrow\infty}R^{\gamma(p+q-1)-2-Q+\alpha}=0\\
&\Longleftrightarrow&\gamma(p+q-1)-2-Q+\alpha<0,
\end{eqnarray*}
we have the following
\begin{corollary}\label{cor1}
Let $n\geq1,$ $p,q>0$, $\alpha\in(0,1)$.\\
\begin{itemize}
\item[(i)] Assume that $u_0\in L^1(\mathbb{H}^n)$ such that
$$\int_{\mathbb{H}^n}u_0(\eta)\,d\eta>0.$$
If
$$2<p+q\leq\frac{2(Q+1)}{Q+\alpha},$$
then problem \eqref{2} has no global weak solutions.
\item[(ii)] Assume that $u_0\in L^1_{\hbox{\tiny{loc}}}(\mathbb{H}^n)$, and there exists a constant $\varepsilon>0$ such that, for every $0<\gamma<Q+\alpha$, the initial datum verifies the following assumption:
$$u_0(\eta)\geq\varepsilon (1+|\eta|_{_{\mathbb{H}}}^2)^{-\gamma/2}.$$
If
$$2<p+q<\frac{2+Q+\gamma-\alpha}{\gamma},$$
then problem \eqref{2} has no global weak solutions.
\end{itemize}
\end{corollary}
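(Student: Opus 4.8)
The plan is to deduce Corollary \ref{cor1} directly from Theorem \ref{theo1} by specializing the kernel to $\mathcal{K}(r)=r^{-\alpha}$ and performing the elementary rearrangements already recorded just before the statement. First I would verify that this choice meets every structural hypothesis imposed on $\mathcal{K}$ in the introduction. The map $r\mapsto r^{-\alpha}$ is continuous and strictly positive on $(0,\infty)$; being monotonically decreasing, it satisfies $\inf_{r\in(0,R)}\mathcal{K}(r)=\mathcal{K}(R)$ for \emph{every} $R>0$ (the infimum over the open interval being the limit as $r\to R^-$), so the required condition with any $R_0>1$ holds. Finally, $\mathcal{K}(|\cdot|_{_{\mathbb{H}}})=|\cdot|_{_{\mathbb{H}}}^{-\alpha}\in L^1_{\hbox{\tiny{loc}}}(\mathbb{H}^n)$ precisely because $\alpha<Q=2n+2$, which is automatic here since $\alpha\in(0,1)$ and $Q\geq 4$. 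Thus Theorem \ref{theo1} applies verbatim.

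For part (i) I would substitute $\mathcal{K}(R)=R^{-\alpha}$ into the threshold of Theorem \ref{theo1}(i) and invoke the equivalence
$$\limsup_{R\to\infty}R^{-\alpha}R^{\frac{2Q+2}{p+q}-Q}>0\iff\frac{2Q+2}{p+q}-Q-\alpha\geq 0,$$
which holds because a pure power $R^{\beta}$ has positive $\limsup$ at infinity iff $\beta\geq 0$. Solving $\frac{2Q+2}{p+q}\geq Q+\alpha$ for $p+q$ (dividing by the positive number $Q+\alpha$) yields exactly $p+q\leq\frac{2(Q+1)}{Q+\alpha}$. Intersecting with the standing assumption $p+q>2$ gives the interval $2<p+q\leq\frac{2(Q+1)}{Q+\alpha}$, while $u_0\in L^1(\mathbb{H}^n)$ and $\int_{\mathbb{H}^n}u_0\,d\eta>0$ are inherited directly, so non-existence follows from Theorem \ref{theo1}(i).

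For part (ii) the analogous substitution converts the condition of Theorem \ref{theo1}(ii) into
$$\liminf_{R\to\infty}R^{\alpha}R^{\gamma(p+q-1)-2-Q}=0\iff\gamma(p+q-1)-2-Q+\alpha<0,$$
since a pure power $R^{\beta}$ has vanishing $\liminf$ at infinity iff $\beta<0$. Rearranging $\gamma(p+q-1)<2+Q-\alpha$ and adding $1$ produces $p+q<1+\frac{2+Q-\alpha}{\gamma}=\frac{2+Q+\gamma-\alpha}{\gamma}$, which is the stated upper bound; Theorem \ref{theo1}(ii) then delivers the conclusion under the pointwise lower bound on $u_0$.

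The argument is genuinely routine, so I do not anticipate a serious obstacle; the only points demanding care are the two sign conventions above and confirming that the resulting $p+q$-windows are non-empty. For (i), non-emptiness of $2<p+q\leq\frac{2(Q+1)}{Q+\alpha}$ forces $\frac{2(Q+1)}{Q+\alpha}>2$, i.e.\ $\alpha<1$, which is exactly why the corollary restricts to $\alpha\in(0,1)$. For (ii), since $\alpha<1$ gives $Q+\alpha<Q+2-\alpha$, the hypothesis $\gamma<Q+\alpha$ guarantees $\gamma<Q+2-\alpha$, hence $\frac{2+Q+\gamma-\alpha}{\gamma}>2$, so the corresponding window is likewise non-trivial.
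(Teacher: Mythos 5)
Your proposal is correct and follows essentially the same route as the paper: the authors also prove the corollary by substituting $\mathcal{K}(R)=R^{-\alpha}$ into the two asymptotic conditions of Theorem \ref{theo1} and recording the resulting equivalences $\frac{2Q+2}{p+q}-Q-\alpha\geq 0$ and $\gamma(p+q-1)-2-Q+\alpha<0$, which rearrange to the stated bounds on $p+q$, with the same observation that $\alpha\in(0,1)$ makes the windows non-empty. Your additional check that $r^{-\alpha}$ satisfies the structural hypotheses on $\mathcal{K}$ (continuity, local integrability for $\alpha<Q$, and the infimum condition via monotonicity) is a welcome bit of extra care that the paper leaves implicit.
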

\begin{remark}
Note that, we have chosen $\alpha\in(0,1)=(0,1)\cap(0,Q)$ to ensure that $2<\frac{2(Q+1)}{Q+\alpha}$.\\
In addition, due to $$\gamma<Q+\alpha\Longleftrightarrow \frac{2(Q+1)}{Q+\alpha}<\frac{2+Q+\gamma-\alpha}{\gamma},$$
the result in Corollary \ref{cor1}-(ii) positively affects on the nonexistence of global weak solutions in Corollary \ref{cor1}-(i). 
\end{remark}

\begin{proof}[Proof of Theorem \ref{theo1}] (i)
The proof is by contradiction. Suppose that $u$ is a global weak solution of (\ref{1}), then, for all $T\gg1$, we have
\begin{eqnarray*}
&{}&\int_{\mathbb{H}^n}u(T,\eta)\psi(T,\eta)\,d\eta-\int_{\mathbb{H}^n}u(0,\eta)\psi(0,\eta)\,d\eta\\
&{}&\geq\int_0^T\int_{\mathbb{H}^n}(\mathcal{K}\ast_{_{\mathbb{H}}}|u|^p)|u|^q\psi(t,\eta)\,d\eta\,dt+\int_0^T\int_{\mathbb{H}^n}u\,\Delta_{\mathbb{H}}\psi(t,\eta)\,d\eta\,dt\\
&{}&\quad+\int_0^T\int_{\mathbb{H}^n}u\,\psi_t(t,\eta)\,d\eta\,dt
\end{eqnarray*}
for all nonnegative compactly supported test function $\psi\in C^{1,2}_{t,x}([0,\infty)\times\mathbb{H}^n)$.\\
We choose
$$\psi(t,\eta):= \varphi^\ell(\eta) \varphi^\ell_3(t):=\varphi_1^\ell(x)\varphi_1^\ell(y)\varphi_2^\ell(\tau) \varphi^\ell_3(t),$$
 with
$$\varphi_1(x):=\Phi\left(\frac{|x|}{T^{\frac 12}}\right),\,\varphi_1(y):=\Phi\left(\frac{|y|}{T^{\frac 12}}\right),\,\varphi_2(\tau):=\Phi\left(\frac{|\tau|}{T}\right),\,\varphi_3(t):=\Phi\left(\frac{t}{T}\right),$$
where $\ell\gg 1$, and $\Phi$ is a smooth nonnegative non-increasing function such that
\[
\Phi(r)=\left\{\begin {array}{ll}
\displaystyle{1}&\displaystyle{\quad\text{if }0\leq r\leq 1/2,}\\\\
\displaystyle{\searrow}&\displaystyle{\quad\text{if }1/2\leq r\leq 1,}\\\\
\displaystyle{0}&\displaystyle{\quad\text {if }r\geq 1.}
\end {array}\right.
\]
Then
\begin{eqnarray}\label{3}
\int_0^T\int_{\mathcal{B}}(\mathcal{K}\ast_{_{\mathbb{H}}}|u|^p)|u|^q\psi(t,\eta)\,d\eta\,dt+\int_{\mathcal{B}}u_0(\eta)\varphi^\ell(\eta)\,d\eta&\leq&-\int_0^T\int_{\mathcal{C}}u\,\varphi^\ell_3(t)\,\Delta_{\mathbb{H}}\varphi^\ell(\eta)\,d\eta\,dt\nonumber\\
&{}&\quad -\int_{\frac{T}{2}}^T\int_{\mathcal{B}}u\,\varphi^\ell(\eta)\partial_t(\varphi^\ell_3(t))\,d\eta\,dt\nonumber\\
&=:&I_1+I_2,
\end{eqnarray}
where
$$\mathcal{B}=\{\eta=(x,y,\tau)\in\mathbb{H}^n;\,\,|x|^2,|y|^2,|\tau|\leq T\},$$ and $$\mathcal{C}=\{\eta=(x,y,\tau)\in\mathbb{H}^n;\,\,\frac{T^{\frac 12}}{2}\leq |x|,|y|\leq T^{\frac 12},\,\frac{T}{2}\leq|\tau|\leq T\}.$$
Let us start to estimate $I_1$. Using the following H\"older's inequality
$$\int ab\leq\left(\int a^{\frac{p+q}{2}}\right)^{\frac{2}{p+q}} \left(\int a^{\frac{p+q}{p+q-2}}\right)^{\frac{p+q-2}{p+q}} $$
we have
\begin{eqnarray}\label{4}
I_1&\leq&\int_0^T\int_{\mathcal{C}}|u|\,\varphi^\ell_3(t)\,\left|\Delta_{\mathbb{H}}\varphi^\ell(\eta)\right|\,d\eta\,dt\nonumber\\
&=&\int_0^T\int_{\mathcal{C}}|u|\,\psi^{\frac{2}{p+q}}(t,\eta)\psi^{-\frac{2}{p+q}}(t,\eta)\varphi^\ell_3(t)\,\left|\Delta_{\mathbb{H}}\varphi^\ell(\eta)\right|\,d\eta\,dt\nonumber\\
&\leq&\left(\int_0^T\int_{\mathcal{C}}|u|^{\frac{p+q}{2}}\psi(t,\eta)\,d\eta\,dt\right)^{\frac{2}{p+q}}\nonumber\\
&{}&\qquad\qquad\times\left(\int_0^T\int_{\mathcal{C}}\psi^{-\frac{2}{p+q-2}}(t,\eta)\varphi^{\frac{\ell(p+q)}{p+q-2}}_3(t)\,\left|\Delta_{\mathbb{H}}\varphi^\ell(\eta)\right|^{\frac{p+q}{p+q-2}}\,d\eta\,dt\right)^{\frac{p+q-2}{p+q}} .
\end{eqnarray}
Similarly,
\begin{eqnarray}\label{5}
I_2&\leq&\int_{\frac T2}^T\int_{\mathcal{B}}|u|\,\varphi^\ell(\eta)\left|\partial_t(\varphi^\ell_3(t))\right|\,d\eta\,dt\nonumber\\
&=&\int_{\frac T2}^T\int_{\mathcal{B}}|u|\,\psi^{\frac{2}{p+q}}(t,\eta)\psi^{-\frac{2}{p+q}}(t,\eta)\varphi^\ell(\eta)\left|\partial_t(\varphi^\ell_3(t))\right|\,d\eta\,dt\nonumber\\
&\leq&\left(\int_{\frac T2}^T\int_{\mathcal{B}}|u|^{\frac{p+q}{2}}\psi(t,\eta)\,d\eta\,dt\right)^{\frac{2}{p+q}}\nonumber\\
&{}&\qquad\qquad\times\left(\int_{\frac T2}^T\int_{\mathcal{B}}\psi^{-\frac{2}{p+q-2}}(t,\eta)\varphi^{\frac{\ell(p+q)}{p+q-2}}(\eta)\,\left|\partial_t(\varphi_3^\ell(t))\right|^{\frac{p+q}{p+q-2}}\,d\eta\,dt\right)^{\frac{p+q-2}{p+q}}.
\end{eqnarray}
Inserting \eqref{4}-\eqref{5} into \eqref{3}, we arrive at
\begin{eqnarray}\label{6}
&{}&\int_0^T\int_{\mathcal{B}}(\mathcal{K}\ast_{_{\mathbb{H}}}|u|^p)|u|^q\psi(t,\eta)\,d\eta\,dt+\int_{\mathcal{B}}u_0(\eta)\varphi^\ell(\eta)\,d\eta\nonumber\\
&{}&\leq\left(\int_0^T\int_{\mathcal{C}}|u|^{\frac{p+q}{2}}\psi(t,\eta)\,d\eta\,dt\right)^{\frac{2}{p+q}}\nonumber\\
&{}&\qquad\qquad\times\left(\int_0^T\int_{\mathcal{C}}\psi^{-\frac{2}{p+q-2}}(t,\eta)\varphi^{\frac{\ell(p+q)}{p+q-2}}_3(t)\,\left|\Delta_{\mathbb{H}}\varphi^\ell(\eta)\right|^{\frac{p+q}{p+q-2}}\,d\eta\,dt\right)^{\frac{p+q-2}{p+q}}\nonumber\\
&{}&\quad+\left(\int_{\frac T2}^T\int_{\mathcal{B}}|u|^{\frac{p+q}{2}}\psi(t,\eta)\,d\eta\,dt\right)^{\frac{2}{p+q}}\nonumber\\
&{}&\qquad\qquad\times\left(\int_{\frac T2}^T\int_{\mathcal{B}}\psi^{-\frac{2}{p+q-2}}(t,\eta)\varphi^{\frac{\ell(p+q)}{p+q-2}}(\eta)\,\left|\partial_t(\varphi_3^\ell(t))\right|^{\frac{p+q}{p+q-2}}\,d\eta\,dt\right)^{\frac{p+q-2}{p+q}}\nonumber\\
&{}&=J_1\,\left(\int_0^T\int_{\mathcal{C}}|u|^{\frac{p+q}{2}}\psi(t,\eta)\,d\eta\,dt\right)^{\frac{2}{p+q}}+\,J_2\,\left(\int_{\frac T2}^T\int_{\mathcal{B}}|u|^{\frac{p+q}{2}}\psi(t,\eta)\,d\eta\,dt\right)^{\frac{2}{p+q}}\qquad
\end{eqnarray}
where
$$J_1:=\left(\int_0^T\int_{\mathcal{C}}\psi^{-\frac{2}{p+q-2}}(t,\eta)\varphi^{\frac{\ell(p+q)}{p+q-2}}_3(t)\,\left|\Delta_{\mathbb{H}}\varphi^\ell(\eta)\right|^{\frac{p+q}{p+q-2}}\,d\eta\,dt\right)^{\frac{p+q-2}{p+q}},$$
and
$$J_2:=\left(\int_{\frac T2}^T\int_{\mathcal{B}}\psi^{-\frac{2}{p+q-2}}(t,\eta)\varphi^{\frac{\ell(p+q)}{p+q-2}}(\eta)\,\left|\partial_t(\varphi_3^\ell(t))\right|^{\frac{p+q}{p+q-2}}\,d\eta\,dt\right)^{\frac{p+q-2}{p+q}} .$$
Let us estimate $J_2$. As $\partial_t\varphi_3^\ell(t)=\ell\varphi_3^{\ell-1}(t)\partial_t\varphi_3(t)$, we have
\begin{eqnarray*}
J_2&\leq& C\,\left(\int_{\mathcal{B}}\varphi^\ell(\eta)\,d\eta\right)^{\frac{p+q-2}{p+q}}\left(\int_0^T\varphi_3^{\ell-\frac{p+q}{p+q-2}}(t)\,\left|\partial_t\varphi_3(t)\right|^{\frac{p+q}{p+q-2}}\,dt\right)^{\frac{p+q-2}{p+q}}\\
&=& C\,\left(\int_{\mathcal{B}}\varphi^\ell(\eta)\,d\eta\right)^{\frac{p+q-2}{p+q}}\left(\int_0^T\Phi^{\ell-\frac{p+q}{p+q-2}}\left(\frac{t}{T}\right)\,\left|\partial_t\Phi\left(\frac{t}{T}\right)\right|^{\frac{p+q}{p+q-2}}\,dt\right)^{\frac{p+q-2}{p+q}}.
\end{eqnarray*}
Letting
$$\widetilde{x}=\frac{x}{T^{\frac 12}},\qquad\widetilde{y}=\frac{y}{T^{\frac 12}},\qquad\widetilde{\tau}=\frac{\tau}{T},\qquad \widetilde{t}=\frac{t}{T},$$
and using the fact that $\varphi\leq1$ and meas$(\mathcal{B})= C\,T^{\frac Q2}$, we get
\begin{equation}\label{7}
J_2\leq C\,T^{\frac{Q(p+q-2)}{2(p+q)}-\frac{2}{p+q}}\left(\int_0^1\Phi^{\ell-\frac{p+q}{p+q-2}}(\tilde{t})\,\left|\Phi^{\prime}(\tilde{t})\right|^{\frac{p+q}{p+q-2}}\,d\widetilde{t}\right)^{\frac{p+q-2}{p+q}}\leq C\,T^{\frac{Q(p+q-2)}{2(p+q)}-\frac{2}{p+q}}.
\end{equation}
To estimate $J_1$, using \eqref{40}, we have
\begin{eqnarray*}
\left|\Delta_{\mathbb{H}}\varphi^\ell(\eta)\right|&=&\left|\Delta_{\mathbb{H}}\left(\varphi_1^\ell(x)\varphi_1^\ell(y)\varphi_2^\ell(\tau) \right)\right|\\
&\leq&\left|\Delta_x\varphi_1^\ell(x)\right|\varphi_1^\ell(y)\varphi_2^\ell(\tau)+\,\varphi_1^\ell(x)\left|\Delta_y\varphi_1^\ell(y)\right|\varphi_2^\ell(\tau)\\
&{}&+\,4(|x|^2+|y|^2)\varphi_1^\ell(x)\varphi_1^\ell(y)\left|\partial_\tau^2\varphi_2^\ell(\tau)\right| +\,4\sum_{j=1}^{n}|x_j|\varphi_1^\ell(x)\left|\partial_{y_j}\varphi_1^\ell(y)\right|\left|\partial_\tau\varphi_2^\ell(\tau)\right| \\
&{}&+\,4\sum_{j=1}^{n}|y_j|\varphi_1^\ell(y)\left|\partial_{x_j}\varphi_1^\ell(x)\right|\left|\partial_\tau\varphi_2^\ell(\tau)\right|,
\end{eqnarray*}
on $\mathcal{C}$. So
\begin{eqnarray*}
\left|\Delta_{\mathbb{H}}\varphi^\ell(\eta)\right|&\leq&\left[\ell(\ell-1)\varphi_1^{\ell-2}(x)|\nabla_x\varphi_1(x)|^2+\ell\varphi_1^{\ell-1}(x)|\Delta_x\varphi_1(x)|\right]\varphi_1^\ell(y)\varphi_2^\ell(\tau)\\
&{}&+\,\varphi_1^\ell(x)\left[\ell(\ell-1)\varphi_1^{\ell-2}(y)|\nabla_y\varphi_1(y)|^2+\ell\varphi_1^{\ell-1}(y)|\Delta_y\varphi_1(y)|\right]\varphi_2^\ell(\tau)\\
&{}&+\,4(|x|^2+|y|^2)\varphi_1^\ell(x)\varphi_1^\ell(y)\left[\ell(\ell-1)\varphi_2^{\ell-2}(\tau)|\partial_\tau\varphi_2(\tau)|^2\right] \\
&{}&+\,\ell\varphi_2^{\ell-1}(\tau)|\partial^2_\tau\varphi_2(\tau)|\\
&{}&+\,4\sum_{j=1}^{n}|x_j|\varphi_1^\ell(x)\left[\ell\varphi_1^{\ell-1}(y)\left|\partial_{y_j}\varphi_1(y)\right|\right]\left[\ell\varphi_2^{\ell-1}(\tau)\left|\partial_\tau\varphi_2(\tau)\right|\right] \\
&{}&+\,4\sum_{j=1}^{n}|y_j|\varphi_1^\ell(y)\left[\ell\varphi_1^{\ell-1}(x)\left|\partial_{x_j}\varphi_1(x)\right|\right]\left[\ell\varphi_2^{\ell-1}(\tau)\left|\partial_\tau\varphi_2(\tau)\right|\right],
\end{eqnarray*}
on $\mathcal{C}$. Substituting $\varphi_1$ and $\varphi_2$  we get
\begin{eqnarray*}
&{}&\left|\Delta_{\mathbb{H}}\varphi^\ell(\eta)\right|\\
&{}&\leq\left[\ell(\ell-1)\Phi^{\ell-2}\left(\frac{|x|}{T^{\frac 12}}\right)\left|\nabla_x\Phi\left(\frac{|x|}{T^{\frac 12}}\right)\right|^2\right.\\
&{}&\qquad\qquad\qquad\qquad\qquad\left.+\,\ell\Phi^{\ell-1}\left(\frac{|x|}{T^{\frac 12}}\right)\left|\Delta_x\Phi\left(\frac{|x|}{T^{\frac 12}}\right)\right|\right]\Phi^\ell\left(\frac{|y|}{T^{\frac 12}}\right)\Phi^\ell\left(\frac{|\tau|}{T}\right)\\
&{}&+\,\Phi^\ell\left(\frac{|x|}{T^{\frac 12}}\right)\left[\ell(\ell-1)\Phi^{\ell-2}\left(\frac{|y|}{T^{\frac 12}}\right)\left|\nabla_y\Phi\left(\frac{|y|}{T^{\frac 12}}\right)\right|^2\right.\\
&{}&\qquad\qquad\quad\qquad\qquad\qquad\qquad\quad\left. +\,\ell\Phi^{\ell-1}\left(\frac{|y|}{T^{\frac 12}}\right)\left|\Delta_y\Phi\left(\frac{|y|}{T^{\frac 12}}\right)\right|\right]\Phi^\ell\left(\frac{|\tau|}{T}\right)\\
&{}&+\,4(|x|^2+|y|^2)\Phi^\ell\left(\frac{|x|}{T^{\frac 12}}\right)\Phi^\ell\left(\frac{|y|}{T^{\frac 12}}\right)\left[\ell(\ell-1)\Phi^{\ell-2}\left(\frac{|\tau|}{T}\right)\left|\partial_\tau\Phi\left(\frac{|\tau|}{T}\right)\right|^2\right.\\
&{}&\qquad\qquad\qquad\quad\qquad\qquad\qquad\qquad\qquad\quad\left.+\ell\Phi^{\ell-1}\left(\frac{|\tau|}{T}\right)\left|\partial^2_\tau\Phi\left(\frac{|\tau|}{T}\right)\right|\right] \\
&{}&+\,4\sum_{j=1}^{n}|x_j|\Phi^\ell\left(\frac{|x|}{T^{\frac 12}}\right)\left[\ell\Phi^{\ell-1}\left(\frac{|y|}{T^{\frac 12}}\right)\left|\partial_{y_j}\Phi\left(\frac{|y|}{T^{\frac 12}}\right)\right|\right]\left[\ell\Phi^{\ell-1}\left(\frac{|\tau|}{T}\right)\left|\partial_\tau\Phi\left(\frac{|\tau|}{T}\right)\right|\right] \\
&{}&+\,4\sum_{j=1}^{n}|y_j|\Phi^\ell\left(\frac{|y|}{T^{\frac 12}}\right)\left[\ell\Phi^{\ell-1}\left(\frac{|x|}{T^{\frac 12}}\right)\left|\partial_{x_j}\Phi\left(\frac{|x|}{T^{\frac 12}}\right)\right|\right]\left[\ell\Phi^{\ell-1}\left(\frac{|\tau|}{T}\right)\left|\partial_\tau\Phi\left(\frac{|\tau|}{T}\right)\right|\right],
\end{eqnarray*}
on $\mathcal{C}$. By letting
$$\widetilde{x}=\frac{x}{T^{\frac 12}},\qquad\widetilde{y}=\frac{y}{T^{\frac 12}},\qquad\widetilde{\tau}=\frac{\tau}{T}.$$
we conclude that
\begin{align*}
&\left|\Delta_{\mathbb{H}}\varphi^\ell(\eta)\right|\\
&\leq\left[\ell(\ell-1)\Phi^{\ell-2}(|\widetilde{x}|)T^{-1} \left|\nabla_{\widetilde{x}}\Phi(|\widetilde{x}|)\right|^2+\ell\Phi^{\ell-1}(|\widetilde{x}|)T^{-1}\left|\Delta_{\widetilde{x}}\Phi(|\widetilde{x}|)\right|\right]\Phi^\ell(|\widetilde{y}|)\Phi^\ell(|\widetilde{\tau}|)\\
&+\,\Phi^\ell(|\widetilde{x}|)\left[\ell(\ell-1)\Phi^{\ell-2}(|\widetilde{y}|)T^{-1}\left|\nabla_{\widetilde{y}}\Phi(|\widetilde{y}|)\right|^2 +\ell\Phi^{\ell-1}(|\widetilde{y}|)T^{-1}\left|\Delta_{\widetilde{y}}\Phi(|\widetilde{y}|)\right|\right]\Phi^\ell(|\widetilde{\tau}|)\\
&+\,4\,T(|\widetilde{x}|^2+|\widetilde{y}|^2)\Phi^\ell(|\widetilde{x}|)\Phi^\ell(|\widetilde{y}|)\left[\ell(\ell-1)\Phi^{\ell-2}(|\widetilde{\tau}|){T^{-2}}\left|\partial_{\widetilde{\tau}}\Phi(|\widetilde{\tau}|)\right|^2\right.\\
&\qquad\qquad\qquad\qquad\qquad\qquad\qquad\qquad\qquad\qquad\qquad\left. +\ell\Phi^{\ell-1}(|\widetilde{\tau}|){T^{-2}}\left|\partial^2_{\widetilde{\tau}}\Phi(|\widetilde{\tau}|)\right|\right] \\
&+\,4\sum_{j=1}^{n}{T^{\frac 12}}|\widetilde{x}_j|\Phi^\ell(|\widetilde{x}|)\left[\ell\Phi^{\ell-1}(|\widetilde{y}|){T^{-\frac 12}} \left|\partial_{{\widetilde{y}}_j}\Phi(|\widetilde{y}|)\right|\right]\left[\ell\Phi^{\ell-1}(|\widetilde{\tau}|){T^{-1}}\left|\partial_{\widetilde{\tau}}\Phi(|\widetilde{\tau}|)\right|\right] \\
&+\,4\sum_{j=1}^{n}{T^{\frac 12}}|\widetilde{y}_j|\Phi^\ell(|\widetilde{y}|)\left[\ell\Phi^{\ell-1}(|\widetilde{x}|){T^{-\frac 12}}\left|\partial_{{\widetilde{x}}_j} \Phi(|\widetilde{x}|)\right|\right]\left[\ell\Phi^{\ell-1}(|\widetilde{\tau}|){T^{-1}}\left|\partial_{\widetilde{\tau}}\Phi(|\widetilde{\tau}|)\right|\right],
\end{align*}
on $\mathcal{C}$. Note that, as
$$\Phi\leq1\,\Rightarrow \Phi^{\ell}\leq \Phi^{\ell-1}\leq \Phi^{\ell-2},$$
we can easily see that
$$\left|\Delta_{\mathbb{H}}\varphi^\ell(\eta)\right|\leq\,C\,{T^{-1}}\left[\Phi^\ell(|\widetilde{x}|)\Phi^\ell(|\widetilde{y}|)\Phi^\ell(|\widetilde{\tau}|)\right]^{\ell-2},\qquad\hbox{for all}\,\,\eta\in\mathcal{C},$$
and therefore, using the fact that $\Phi,\varphi_3\leq1$, we conclude that
\begin{eqnarray}\label{8}
J_1&=&C\,\left(\int_0^T\varphi^\ell_3(t)\,dt\right)^{\frac{p+q-2}{p+q}}\left(\int_{\mathcal{C}}\varphi^{-\frac{2\ell}{p+q-2}}(\eta)\,\left|\Delta_{\mathbb{H}}\varphi^\ell(\eta)\right|^{\frac{p+q}{p+q-2}}\,d\eta\right)^{\frac{p+q-2}{p+q}}\nonumber\\
&\leq&C\,T^{-1}\left(\int_0^T\varphi^\ell_3(t)\,dt\right)^{\frac{p+q-2}{p+q}}\left(\int_{\mathcal{\widetilde{C}}} \left[\Phi(|\widetilde{x}|)\Phi(|\widetilde{y}|)\Phi(|\widetilde{\tau}|)\right]^{\ell-\frac{2(p+q)}{p+q-2}}T^{\frac Q2}\,d\widetilde{\eta}\right)^{\frac{p+q-2}{p+q}}\nonumber\\
&\leq&C\,T^{-1+\frac{p+q-2}{p+q}+\frac{Q(p+q-2)}{2(p+q)}}\nonumber\\
&\leq&C\,T^{-\frac{2}{p+q}+\frac{Q(p+q-2)}{2(p+q)}},
\end{eqnarray}
where we have used the fact that $\ell\gg1$.\\
Using \eqref{7}-\eqref{8}, we get from \eqref{6} that
\begin{eqnarray}\label{9}
\int_0^T\int_{\mathcal{B}}(\mathcal{K}\ast_{_{\mathbb{H}}}|u|^p)|u|^q\psi(t,\eta)\,d\eta\,dt+\int_{\mathcal{B}}u_0(\eta)\varphi^\ell(\eta)\,d\eta \nonumber\\ 
\leq C\,T^{\frac{Q}{2}-\frac{2+Q}{p+q}}\left(\textbf{I} +\textbf{J}\right),
\end{eqnarray}
where
\[
\textbf{I}:= \left(\int_0^T\int_{\mathcal{C}}|u|^{\frac{p+q}{2}}\psi(t,\eta)\,d\eta\,dt\right)^{\frac{2}{p+q}}
\]
and
\[
\textbf{J}:= \left(\int_{\frac T2}^T\int_{\mathcal{B}}|u|^{\frac{p+q}{2}}\psi(t,\eta)\,d\eta\,dt\right)^{\frac{2}{p+q}}.
\]
By the Cauchy-Schwarz inequality, we have
\begin{eqnarray*}
\left(\int_0^T\int_{\mathcal{C}}|u|^{\frac{p+q}{2}}\psi(t,\eta)\,d\eta\,dt\right)^{\frac{2}{p+q}}&=&\left(\int_0^T1\cdotp\int_{\mathcal{C}}|u|^{\frac{p+q}{2}}\psi(t,\eta)\,d\eta\,dt\right)^{\frac{2}{p+q}}\\
&\leq&T^{\frac{1}{p+q}}\left(\int_0^T\left(\int_{\mathcal{C}}|u|^{\frac{p+q}{2}}\psi(t,\eta)\,d\eta\right)^2\,dt\right)^{\frac{1}{p+q}}.
\end{eqnarray*}
Similarly,
$$
\left(\int_{\frac T2}^T\int_{\mathcal{B}}|u|^{\frac{p+q}{2}}\psi(t,\eta)\,d\eta\,dt\right)^{\frac{2}{p+q}}\leq T^{\frac{1}{p+q}}\left(\int_{\frac T2}^T\left(\int_{\mathcal{B}}|u|^{\frac{p+q}{2}}\psi(t,\eta)\,d\eta\right)^2\,dt\right)^{\frac{1}{p+q}}.
$$
Therefore,
\begin{eqnarray}\label{100}
&{}&\int_0^T\int_{\mathcal{B}}(\mathcal{K}\ast_{_{\mathbb{H}}}|u|^p)|u|^q\psi(t,\eta)\,d\eta\,dt+\int_{\mathcal{B}}u_0(\eta)\varphi^\ell(\eta)\,d\eta\nonumber\\
&{}&\leq C\,T^{\frac{Q}{2}-\frac{1+Q}{p+q}}\left[\left(\int_0^T\left(\int_{\mathcal{C}}|u|^{\frac{p+q}{2}}\psi(t,\eta)\,d\eta\right)^2\,dt\right)^{\frac{1}{p+q}}\right.\nonumber\\
&{}&\qquad\qquad\qquad\qquad\quad\left.+\,\left(\int_{\frac T2}^T\left(\int_{\mathcal{B}}|u|^{\frac{p+q}{2}}\psi(t,\eta)\,d\eta\right)^2\,dt\right)^{\frac{1}{p+q}}\right]
\end{eqnarray}
By letting
$$J(t):=\int_{\mathcal{B}}|u|^{\frac{p+q}{2}}\psi(t,\eta)\,d\eta=\int_{\mathbb{H}^n}|u|^{\frac{p+q}{2}}\psi(t,\eta)\,d\eta,\quad\hbox{for almost all}\,\,t\geq0,$$
we get
\begin{eqnarray}\label{10}
&{}&\int_0^T\int_{\mathcal{B}}(\mathcal{K}\ast_{_{\mathbb{H}}}|u|^p)|u|^q\psi(t,\eta)\,d\eta\,dt+\int_{\mathcal{B}}u_0(\eta)\varphi^\ell(\eta)\,d\eta\nonumber\\
&{}&\leq C\,T^{\frac{Q}{2}-\frac{1+Q}{p+q}}\left(\int_0^TJ^2(t)\,dt\right)^{\frac{1}{p+q}}.
\end{eqnarray}
To estimate the left-hand side of \eqref{10}, we have
$$(\mathcal{K}\ast_{_{\mathbb{H}}}|u|^p)(\eta)=\int_{\mathbb{H}^n}\mathcal{K}(d_{_{\mathbb{H}}}(\eta,\xi))|u(\xi)|^p\,d\xi\geq \int_{\mathcal{B}}\mathcal{K}(d_{_{\mathbb{H}}}(\eta,\xi))|u(\xi)|^p\,d\xi.$$
Note that $|\eta|_{_{\mathbb{H}}},\,|\xi|_{_{\mathbb{H}}}\leq \sqrt[4]{5}\,T^{\frac 12}\leq 2\,T^{\frac 12}$ on $\mathcal{B}$, therefore, using \cite{Yang}, we have
$$d_{_{\mathbb{H}}}(\eta,\xi)=|\xi^{-1}\circ \eta|_{_{\mathbb{H}}}\leq 3\left(|\xi|_{_{\mathbb{H}}}+|\eta|_{_{\mathbb{H}}}\right)\leq 12\,T^{\frac 12},\quad\hbox{on}\,\,\mathcal{B},$$
which implies that
$$\mathcal{K}(d_{_{\mathbb{H}}}(\eta,\xi))\geq \mathcal{K}(12\,T^{\frac 12}),\quad\hbox{on}\,\,\mathcal{B},$$
for all $T\gg1$, namely $12\,T^{\frac 12}>R_0$.
So,
$$(\mathcal{K}\ast_{_{\mathbb{H}}}|u|^p)(\eta)\geq  \mathcal{K}(12\,T^{\frac 12})\int_{\mathcal{B}}|u(\xi)|^p\,d\xi\quad\hbox{for all}\,\,\eta\in\mathcal{B}.$$
Then
\begin{eqnarray}\label{11}
&{}&\int_{\mathcal{B}}(\mathcal{K}\ast_{_{\mathbb{H}}}|u|^p)|u(\eta)|^q\psi(t,\eta)\,d\eta\nonumber\\
&{}&\geq  \mathcal{K}(12\,T^{\frac 12})\int_{\mathcal{B}}\int_{\mathcal{B}}|u(\xi)|^p|u(\eta)|^q\psi(t,\eta)\,d\xi\,d\eta\nonumber\\
&{}&\geq\mathcal{K}(12\,T^{\frac 12})\int_{\mathbb{H}^n}\int_{\mathbb{H}^n}|u(\xi)|^p\psi(t,\xi)|u(\eta)|^q\psi(t,\eta)\,d\xi\,d\eta,
\end{eqnarray}
where we have used that $\psi\leq 1$ and $\psi(\cdotp,t)\equiv 0$ outside of $\mathcal{B}$ for all $t\geq0$.
On the other hand, using Cauchy-Schwarz' inequality, we have
\begin{eqnarray}\label{12}
&{}&\int_{\mathbb{H}^n}\int_{\mathbb{H}^n}|u(\xi)|^{\frac{p+q}{2}}\psi(t,\xi)|u(\eta)|^{\frac{p+q}{2}}\psi(t,\eta)\,d\xi\,d\eta\nonumber\\
&{}&=\int_{\mathbb{H}^n}\int_{\mathbb{H}^n}|u(\xi)|^{\frac{p}{2}}\psi^{\frac 12}(t,\xi)|u(\eta)|^{\frac{q}{2}}\psi^{\frac 12}(t,\eta)|u(\xi)|^{\frac{q}{2}}\psi^{\frac 12}(t,\xi)|u(\eta)|^{\frac{p}{2}}\psi^{\frac 12}(t,\eta)\,d\xi\,d\eta\nonumber\\
&{}&\leq\left(\int_{\mathbb{H}^n}\int_{\mathbb{H}^n}|u(\xi)|^p\psi(t,\xi)|u(\eta)|^q\psi(t,\eta)\,d\xi\,d\eta\right)^{\frac 12}\nonumber\\
&{}&\qquad\qquad\qquad\qquad\qquad\times\left(\int_{\mathbb{H}^n}\int_{\mathbb{H}^n}|u(\xi)|^q\psi(t,\xi)|u(\eta)|^p\psi(t,\eta)\,d\xi\,d\eta\right)^{\frac 12}\nonumber\\
&{}&=\int_{\mathbb{H}^n}\int_{\mathbb{H}^n}|u(\xi)|^p\psi(t,\xi)|u(\eta)|^q\psi(t,\eta)\,d\xi\,d\eta.
\end{eqnarray}
By \eqref{11} and \eqref{12}, we conclude that
\begin{eqnarray}\label{13}
&{}&\int_{\mathcal{B}}(\mathcal{K}\ast_{_{\mathbb{H}}}|u|^p)|u(\eta)|^q\psi(t,\eta)\,d\eta\nonumber\\
&{}&\geq \mathcal{K}(12\,T^{\frac 12})\int_{\mathbb{H}^n}\int_{\mathbb{H}^n}|u(\xi)|^{\frac{p+q}{2}}\psi(t,\xi)|u(\eta)|^{\frac{p+q}{2}}\psi(t,\eta)\,d\xi\,d\eta\nonumber\\
&{}&= \mathcal{K}(12\,T^{\frac 12})\left(\int_{\mathbb{H}^n}|u(\eta)|^{\frac{p+q}{2}}\psi(t,\eta)\,d\eta\right)^2\nonumber\\
&{}&=\mathcal{K}(12\,T^{\frac 12})\,J^2(t).
\end{eqnarray}
Combining \eqref{10} and \eqref{13}, we infer that
\begin{equation}\label{14}
\mathcal{K}(12\,T^{\frac 12})\int_0^TJ^2(t)\,dt+\,\int_{\mathcal{B}}u_0(\eta)\varphi^\ell(\eta)\,d\eta\leq C\,T^{\frac{Q}{2}-\frac{1+Q}{p+q}}\left(\int_0^TJ^2(t)\,dt\right)^{\frac{1}{p+q}}.
\end{equation}
As $\displaystyle \int_{\mathbb{H}^n}u_0(\eta)\,d\eta>0\Longrightarrow \int_{\mathcal{B}}u_0(\eta)\varphi^\ell(\eta)\,d\eta\geq0$, we arrive at
$$
\mathcal{K}(12\,T^{\frac 12})\int_0^TJ^2(t)\,dt\leq C\,T^{\frac{Q}{2}-\frac{1+Q}{p+q}}\left(\int_0^TJ^2(t)\,dt\right)^{\frac{1}{p+q}},
$$
i.e.
\begin{equation}\label{15}
\left(\int_0^TJ^2(t)\,dt\right)^{\frac{p+q-1}{p+q}}\leq \frac{C}{\mathcal{K}(12\,T^{\frac 12})\,T^{\frac{1+Q}{p+q}-\frac{Q}{2}}},
\end{equation}
As
$$ \limsup_{R\rightarrow\infty}\mathcal{K}(R)R^{\frac{2Q+2}{p+q}-Q}>0,$$
there exists a sequence $\{R_j\}_j$ such that 
\begin{equation}\label{16}
R_j\rightarrow+\infty\qquad\hbox{and}\qquad \mathcal{K}(R_j)R_j^{\frac{2Q+2}{p+q}-Q}\longrightarrow\ell>0,\quad\hbox{as}\,\,j\rightarrow\infty.
\end{equation}
Without loss of generality, we may assume that $R_j>R_{j-1}$ for all $j>1$.\\

\noindent {\bf If $\ell=\infty$}, replacing $T$ by $R_j^2$, we have
$$
\left(\int_0^{R_j^2}J^2(t)\,dt\right)^{\frac{p+q-1}{p+q}}\leq \frac{C}{\mathcal{K}(12\,R_j)\,R_j^{\frac{2(1+Q)}{p+q}-Q}},
$$
passing to the limit when $j\rightarrow\infty$, using \eqref{16}, and the monotone convergence theorem, we infer that
$$
\int_0^{\infty}J^2(t)\,dt=0,
$$
which implies that $J\equiv 0$ a.e. on $\mathbb{R}^+$. By \eqref{14}, we conclude that
$$\int_{\mathcal{B}}u_0(\eta)\varphi^\ell(\eta)\,d\eta\leq 0,\qquad\hbox{for all}\,\,T\gg1.$$
Letting $T\rightarrow\infty$, using $u_0\in L^1(\mathbb{H}^n)$ and the dominated convergence theorem, we arrive at
$$0<\int_{\mathbb{H}^n}u_0(\eta)\,d\eta\leq 0,$$
contradiction.\\

\noindent {\bf If $\ell<\infty$}, then \eqref{15} shows that $J\in L^2(0,\infty)$. This implies that
$$\int_0^{R_j^2}\left(\int_{\mathcal{C}_j}|u|^{\frac{p+q}{2}}\psi(t,\eta)\,d\eta\right)^2\,dt,\quad \int_{\frac{R_j^2}{2}}^{R_j^2}\left(\int_{\mathcal{B}_j}|u|^{\frac{p+q}{2}}\psi(t,\eta)\,d\eta\right)^2\,dt\longrightarrow 0,$$
when $j\rightarrow\infty$, where
$$\mathcal{B}_j=\{\eta=(x,y,\tau)\in\mathbb{H}^n;\,\,|x|^2,|y|^2,|\tau|\leq R_j^2\},$$ and $$\mathcal{C}_j=\{\eta=(x,y,\tau)\in\mathbb{H}^n;\,\,\frac{R_j}{2}\leq |x|,|y|\leq R_j,\,\frac{R_j^2}{2}\leq|\tau|\leq R_j^2\}.$$
By \eqref{100} and \eqref{13}, and by replacing $T$ by $R_j^2$, we conclude that
$$
\left(\int_0^{R_j^2}J^2(t)\,dt\right)^{\frac{p+q-1}{p+q}}\leq \frac{C}{\mathcal{K}(12\,R_j)\,R_j^{\frac{2(1+Q)}{p+q}-Q}}o(1),\qquad\hbox{as}\,\,j\rightarrow\infty.
$$
Again, passing to the limit when $j\rightarrow\infty$, using \eqref{16} and $\ell\in(0,\infty)$, we get
$$
\int_0^{\infty}J^2(t)\,dt=0,
$$
which implies as above that
$$0<\int_{\mathbb{H}^n}u_0(\eta)\,d\eta\leq 0,$$
contradiction.\\

\noindent (ii) As $T>1$, we have
\begin{eqnarray*}
\int_{\mathcal{B}}u_0(\eta)\varphi^\ell(\eta)\,d\eta&\geq& \int_{\mathcal{C}_0}u_0(\eta)\varphi^\ell(\eta)\,d\eta\\
&=&\int_{\mathcal{C}_0}u_0(\eta)\,d\eta\\
&\geq& \varepsilon \int_{\mathcal{C}_0}(1+|\eta|_{_{\mathbb{H}}}^2)^{-\gamma/2}\,d\eta\\
&\geq& \varepsilon\,C \int_{\mathcal{C}_0}\left(\frac{T}{2}+ \frac{T}{2}\right)^{-\gamma/2}\,d\eta\\
&=&\varepsilon\,C T^{-\frac{\gamma}{2}}\,\hbox{meas}(\mathcal{C}_0)\\
&=&\varepsilon\,C T^{\frac{Q-\gamma}{2}},
\end{eqnarray*}
where 
\begin{equation}\label{C_0}
\mathcal{C}_0:=\{\eta=(x,y,\tau)\in\mathbb{H}^n;\,\, |x|,|y|\leq \frac{T^{\frac 12}}{2},\,|\tau|\leq \frac{T}{2}\}.
\end{equation} 
Therefore by repeating the same calculation as in the subcritical case (see \eqref{14}), we get
$$
\mathcal{K}(12\,T^{\frac 12})\int_0^TJ^2(t)\,dt+\,\varepsilon\,C T^{\frac{Q-\gamma}{2}}\leq C\,T^{\frac{Q}{2}-\frac{1+Q}{p+q}}\left(\int_0^TJ^2(t)\,dt\right)^{\frac{1}{p+q}},
$$
i.e.
\begin{eqnarray*}
\int_0^TJ^2(t)\,dt+\,\frac{\varepsilon\,C}{\mathcal{K}(12\,T^{\frac 12}) T^{\frac{\gamma-Q}{2}}}&\leq& \frac{C}{\mathcal{K}(12\,T^{\frac 12})\,T^{\frac{1+Q}{p+q}-\frac{Q}{2}}}\left(\int_0^TJ^2(t)\,dt\right)^{\frac{1}{p+q}}\\
&\leq&\frac{C}{\left(\mathcal{K}(12\,T^{\frac 12})\,T^{\frac{1+Q}{p+q}-\frac{Q}{2}}\right)^{\frac{p+q}{p+q-1}}}+\frac{1}{2}\int_0^TJ^2(t)\,dt,
\end{eqnarray*}
where we have used the following Young's inequality
$$ab\leq \frac{1}{2}a^{p+q}+Cb^{\frac{p+q}{p+q-1}}.$$
This implies that
$$
\frac{\varepsilon\,C}{\mathcal{K}(12\,T^{\frac 12})\, T^{\frac{\gamma-Q}{2}}}\leq\frac{C}{\left(\mathcal{K}(12\,T^{\frac 12})\,T^{\frac{1+Q}{p+q}-\frac{Q}{2}}\right)^{\frac{p+q}{p+q-1}}},
$$
i.e.
$$
\varepsilon\leq C\,\left[\mathcal{K}(12\,T^{\frac 12})\right]^{-\frac{1}{p+q-1}}\,T^{\frac{\gamma-Q}{2}-\frac{2(1+Q)-Q(p+q)}{2(p+q-1)}},
$$
and then
$$
\varepsilon\leq C\,\liminf_{T\longrightarrow\infty}\left(\left[\mathcal{K}(12\,T^{\frac 12})\right]^{-\frac{1}{p+q-1}}\,T^{\frac{\gamma-Q}{2}-\frac{2(1+Q)-Q(p+q)}{2(p+q-1)}}\right)=0,
$$
we get a contradiction.
\end{proof}


\subsection*{Acknowledgment}
Ahmad Fino is supported by the Research Group Unit, College of Engineering and Technology, American University of the Middle East.



\noindent{\bf Address}:\\
College of Engineering and Technology, American University of the Middle East, Kuwait.\\
\vspace{-7mm}
\begin{verbatim}
e-mail:  ahmad.fino@aum.edu.kw
\end{verbatim}
 Department of Mathematics, Faculty of Arts and Science, Khalifa University, P.O. Box: 127788,  Abu Dhabi, UAE.\\
\vspace{-7mm}
\begin{verbatim}
e-mail:  mokhtar.kirane@ku.ac.ae
\end{verbatim}
Department of Mathematics, Faculty of Sciences, Lebanese University, Tripoli, P.O. Box 1352, Lebanon
\vspace{-7mm}
\begin{verbatim}
e-mail:  bbarakeh@ul.edu.lb
\end{verbatim}
Department of Mathematics, Sultan Qaboos University,  FracDiff Research Group (DR/RG/03),  P.O. Box 36, Al-Khoud 123, Muscat, Oman
vspace{-7mm}
\begin{verbatim}
e-mail:  skerbal@squ.edu.om
\end{verbatim}


\begin{thebibliography}{1}
\bibitem{Baras-Pierre} P. Baras, M. Pierre, \textit{ Crit\`ere d’existence de solutions positives pour des \'equations semi-lin\'eaires non monotones}. 
Annales de l'Institut Henri Poincar\'e C, Analyse non lin\'eaire
Volume 2, Issue 3, May–June 1985, Pages 185--212.

\bibitem{Baras} P. Baras, R. Kersner, \textit{Local and global solvability of a class of semilinear parabolic equations.}  J. Diff. Equations.  {\bf 68} (1987), 2, 238--252.

\bibitem{Fino1} W. Chen, A. Z. Fino, \textit{Blow-up of solutions to semilinear strongly damped wave equations with different nonlinear terms in an exterior domain.} Mathematical Methods in the Applied Sciences, {\bf 44} (2021) no. 8, pp. 6787--6807.

\bibitem{Ambrosio} L. D'Ambrosio, \textit{Critical degenerate inequalities on the Heisenberg group.} Manuscripta math. {\bf 106} (2001), 519--536.

\bibitem{Fino3} T. A. Dao, A. Z. Fino, \textit{Critical exponent for semi-linear structurally damped wave equation of derivative type.} Math. Methods Appl. Sci., {\bf 43} (2020), 9766--9775.

\bibitem{Filippucci1} R. Filippucci, M. Ghergu, \textit{Fujita type results for quasilinear parabolic inequalities with nonlocal terms.} DCDS Ser. A {\bf 42} (2022), 1817--1833.

\bibitem{Filippucci2} R. Filippucci, M. Ghergu, \textit{Higher order evolution inequalities with nonlinear convolution
terms.} Nonlinear Analysis {\bf 221} (2022), 112881.

\bibitem{Fino4} A. Z. Fino, \textit{Finite time blow up for wave equations with strong damping in an exterior domain.}
Mediterranean Journal of Mathematics, {\bf 17} (2020), no. 6, Paper No. 174, 21 pp.

\bibitem{Fino2} A. Z. Fino, M. Ruzhansky, B. T. Torebek, \textit{Fujita-type results for the degenerate parabolic equations on the Heisenberg groups.} Nonlinear Differ. Equ. Appl. {\bf 31} (2024), 19. 

\bibitem{Fuj} H. Fujita, \textit{On the blowing up of solutions of the
problem for $u_t=\Delta u+u^{1+\alpha},$} J. Fac. Sci. Univ. Tokyo
\textbf{13} (1966), 109--124.

\bibitem{Kirane-Guedda} M. Guedda, M. Kirane, \textit{ Criticality for Some Evolution Equations}. Differential Equations, Vol. 37, No. 4, 2001, pp. 540--550. Translated from Differentsial’nye Uravneniya, Vol. 37, No. 4, 2001, pp. 511--520.

\bibitem{Hartee1} D.R. Hartree, \textit{The wave mechanics of an atom with a non-Coulomb central field. Part I. Theory and methods.} Math. Proc. Cambridge Philos. Soc. {\bf 24} (1928) 89--110.

\bibitem{Hartee2} D.R. Hartree, \textit{The wave mechanics of an atom with a non-Coulomb central field. Part II. Some results and discussion.} Math. Proc. Cambridge Philos. Soc. {\bf 24} (1928) 111--132.

\bibitem{Hartee3} D.R. Hartree, \textit{The wave mechanics of an atom with a non-Coulomb central field. Part III. Term values and intensities in series in optical spectra.} Math. Proc. Cambridge Philos. Soc. {\bf 24} (1928) 426--437.

\bibitem{11}  K. Hayakawa, \textit{ On nonexistence of global solutions of
some semilinear parabolic differential equations}, Proc. Japan Acad. 4 (1973), 503--505.

\bibitem{Jleli-Kirane} M. Jleli, M. Kirane, B. Samet, \textit{A Fujita-type theorem for a multitime evolutionary p-Laplace inequality in the Heisenberg group.} Electron. J. Differ. Equ. {\bf 2016} (2016), no. 303, 1--8.

\bibitem{KLT} M. Kirane, Y. Laskri, N.-e Tatar, \textit{Critical exponents of Fujita type for certain evolution equations and systems with Spatio-Temporal Fractional derivatives.} J. Math. Anal. Appl.  {\bf 312} (2005), 488--501.

\bibitem{17}  K. Kobayashi, T. Sirao, H.  Tanaka,
{\it On the growing up problem for semilinear heat equations},  J. Math.
Soc. Japan {\bf 29} (1977),  407--424.

 \bibitem{PM1} E. Mitidieri, S. I. Pohozaev,  \textit{A priori estimates and blow-up of solutions to nonlinear partial differential equations and inequalities.}  Proc. Steklov. Inst. Math. {\bf 234} (2001), 1--383.
 
  \bibitem{Pekar} S. Pekar, \textit{Untersuchung \"{u}ber die Elektronentheorie der Kristalle.} Akademie Verlag, Berlin, 1954.
  
 \bibitem{Pohoz1} S. Pohozaev, L. Veron, \textit{Nonexistence results of solutions of semilinear differential inequalities on the Heisenberg group.} Manuscripta math. {\bf 102} (2000), 85--99.
 
 \bibitem{22}  S. Sugitani,  {\it On nonexistence of global solutions
for some nonlinear integral equations},  Osaka J. Math.  {\bf 12}
(1975), 45--51.

\bibitem{Yang} Yunyan Yang, \textit{Trudinger-Moser inequalities on the entire Heisenberg group.} Math. Nachr. 287 (2014), no. 8-9, 1071--1080.

\bibitem{Zhang} Qi. S. Zhang, \textit{A blow up result for a nonlinear wave equation with damping:
the critical case.}  C. R. Acad. Sci. Paris. {\bf 333} (2001), {2}, 109--114.

\end{thebibliography}
\end{document}